\newcommand{\numberset}{\mathbb} 
\newcommand{\R}{\numberset{R}} 
\newcommand{\fe}{f_\varepsilon}
\newcommand{\varphie}{\varphi_\varepsilon}
\numberwithin{equation}{section}
\newtheorem{thm}{\indent\bf {Theorem}}[section]
\newtheorem{prop}[thm]{\indent\bf Proposition}
\theoremstyle{definition}
\newtheorem{exmpl} [thm] {\indent \textsc {Example}}
\newtheorem{rmk}[thm]{\indent \textsc {Remark}}
\def\author@andify{%
   \nxandlist {\unskip ,\penalty-1 \space\ignorespaces}%
     {\unskip {} }%
     {\unskip ,\penalty-2 \space }%
}
\begin{document}

\title[A class of weighted Hardy inequalities\\
 and applications to evolution problems]
{A class of weighted Hardy inequalities\\
and applications to evolution problems}

\author[A. Canale]{Anna Canale}
\address{Dipartimento di Ingegneria dell'Informazione ed Elettrica e Matematica Applicata, 
Universit\'a degli Studi di Salerno, Via Giovanni Paolo II, 132, 84084 Fisciano
(Sa), Italy.}

\author[F.Pappalardo]{Francesco Pappalardo}
\address{Dipartimento di Matematica e Applicazioni \textquotedblleft Renato  Caccioppoli\textquotedblright,
Universit\'a degli Studi di Napoli Federico II, Complesso Universitario Monte S. Angelo, Via Cintia, 80126 Napoli, Italy.}

\author[C. Tarantino]{Ciro Tarantino}
\address{Dipartimento di Scienze Economiche e Statistiche, 
Universit\'a degli Studi di Napoli Federico II, Complesso Universitario Monte S. Angelo,
 Via Cintia, 80126 Napoli, Italy.}

\thanks{The first two authors are members of the Gruppo Nazionale per l'Analisi Matematica, la Probabilit\'a e le loro Applicazioni 
(GNAMPA) of the Istituto Nazionale di Alta Matematica (INdAM)}

\subjclass[2010]{35K15, 35K65, 35B25, 34G10, 47D03}

\begin{abstract}
We state the following weighted Hardy inequality
\begin{equation*}
c_{o, \mu}\int_{{\R}^N}\frac{\varphi^2 }{|x|^2}\, d\mu\le  
\int_{{\R}^N} |\nabla\varphi|^2 \, d\mu +
 K \int_{\R^N}\varphi^2 \, d\mu \quad \forall\, \varphi \in H_\mu^1
\end{equation*}
in the context of the study of the Kolmogorov operators
\begin{equation*}
Lu=\Delta u+\frac{\nabla \mu}{\mu}\cdot\nabla u
\end{equation*} perturbed by inverse square potentials and of the related evolution problems.
The function $\mu$ in the drift term is a probability density on $\R^N$. 
We prove the optimality of the constant $c_{o, \mu}$
and state existence and nonexistence results following the Cabr\'e-Martel's approach
\cite{CabreMartel} extended to Kolmogorov operators.


\end{abstract}

\maketitle

{\it Keywords}: Weighted Hardy inequality, optimal constant, Kolmogorov operators, singular potentials.

\bigskip

\section{Introduction}
This paper on weighted Hardy inequalities fits in the framework 
of the study of Kolmogorov operators
on smooth functions
\begin{equation*}
Lu=\Delta u+\frac{\nabla \mu}{\mu}\cdot\nabla u,
\end{equation*}
with $\mu$ probability density on $\R^N$, 
and of the related evolution problems
$$
(P)\quad \left\{\begin{array}{ll}
\partial_tu(x,t)=Lu(x,t)+V(x)u(x,t),\quad \,x\in {\mathbb R}^N, t>0,\\
u(\cdot ,0)=u_0\geq 0\in L_\mu^2.
\end{array}
\right. $$
The operator $L$ in $(P)$ is perturbed by the singular potential  $V(x)=\frac{c}{|x|^2}$, $c>0$,
and $L_\mu^2:=L(\R^N, d\mu)$,  with $d\mu(x)=\mu(x)dx$.

The interest in inverse square potentials of type $V\sim\frac{c}{|x|^2}$
relies in their criticality: the strong maximum principle and Gaussian bounds
fail (see \cite{Aronson}).
Furthermore interest in singular potentials is due to the applications to many fields, for example 
in many physical contexts as molecular physics
\cite{Levy}, 
quantum cosmology 
(see e.g. \cite{BerestyckiEsteban}), quantum mechanics \cite{BarasGoldstein2} 
and combustion models \cite{Gelfand}.

The operator $\Delta+V$,  $V(x)=\frac{c}{|x|^{2}}$,
has the same homogeneity as the Laplacian and do not belong to the Kato's class,
then $V$ cannot be regarded as a lower order perturbation term.

A remarkable result stated in 1984 by P. Baras and J. A. Goldstein in \cite{BarasGoldstein}
shows that the evolution problem $(P)$ with $L=\Delta$
admits a unique positive solution
if $c\leq c_o=\left( \frac{N-2}{2} \right)^{2}$ and no positive solutions exist if $c>c_o$.
When it exists, the solution is
exponentially bounded, on the contrary, if $c>c_o$, there is the so-called instantaneous blowup phenomenon.

In order to extend these results to Kolmogorov operators the technique must be different.

A result analogous to that stated in  \cite{BarasGoldstein} has been obtained in 1999 by 
X. Cabr\'e and Y. Martel \cite{CabreMartel}  for more general potentials 
$0\le V\in L_{loc}^1(\R^N)$ with a different approach.

To state the existence and nonexistence results we follow the Cabr\'e-Martel's approach.
We use the relation between the weak solution of $(P)$
and the {\it bottom of the spectrum} of the operator $-(L+V)$
\begin{equation*}
\lambda_1(L+V):=\inf_{\varphi \in H^1_\mu\setminus \{0\}}
\left(\frac{\int_{{\mathbb R}^N}|\nabla \varphi |^2\,d\mu
-\int_{{\mathbb R}^N}V\varphi^2\,d\mu}{\int_{{\mathbb R}^N}\varphi^2\,d\mu}
\right)
\end{equation*}
with $H^1_\mu$ suitable weighted Sobolev space.

When $\mu=1$ Cabr\'e and Martel 
showed that the boundedness  of 
 $\lambda_1(\Delta+V)$, 
$0\le V\in L_{loc}^1({\mathbb R}^N)$, is a necessary 
and sufficient condition for the existence of positive exponentially bounded in time
solutions to the associated initial value problem. 
Later in \cite{GGR, CGRT} similar results have been extended to Kolmogorov operators.
The proof uses some properties of the operator $L$ and of its corresponding semigroup
in $L_\mu^2(\R^N)$.

For Ornstein-Uhlenbeck type operators 
$Lu=\Delta u - \sum_{i=1}^{n}A(x-a_i)\cdot \nabla u$,
$a_i\in \R^N$, $i=1,\dots , n$, perturbed by multipolar
inverse square potentials, weighted multipolar Hardy inequalities
and related existence and nonexistence results were stated in \cite{CP}. 
In such a case, the invariant measure for these operators is
$d\mu =\mu_A (x) dx =Ke^{-\frac{1}{2}\sum_{i=1}^{n}\left\langle A(x-a_i), x-a_i\right\rangle }dx$.  

There is a close relation between
the estimate of the bottom of the spectrum $\lambda_1(L+V)$ and the weighted Hardy inequality 
with $V(x)=\frac{c}{|x|^2}$, $c\le c_{o,\mu}$,  
\begin{equation}\label{whiIntro}
\int_{\R^N}V\,\varphi^2\,d\mu
\leq\int_{\R^N}|\nabla \varphi |^2d\mu+
K\int_{\R^N} \varphi^2d\mu \quad \forall\,\varphi\in H^1_\mu,\qquad K>0
\end{equation}
with the best possible constant $c_{o,\mu}$.

In particular the existence of positive solutions to $(P)$
is related to the Hardy inequality (\ref{whiIntro})  and the nonexistence 
is due to the optimality of the constant $c_{o, \mu}$. 

The main results in the paper are,  in Section 2, the weighted Hardy inequality (\ref{whiIntro}) with 
measures which satisfy fairly general conditions and the optimality 
of the constant $c_{o, \mu}$ in Section 3. 

The proof of the weighted Hardy inequality is different from the others in literature.
It is based on the introduction of a suitable $C^\infty$-function and it can be used 
to prove inequality (\ref{whiIntro}) with $0\le V\in L^1_{loc}(\R^N)$ of a more general type,
in other words Hardy type inequalities. 

In \cite{CGRT}
the authors state a weighted Hardy inequality using a different approach and
improved Hardy inequalities. This requires suitable conditions on $\mu$.
Our technique, with different assumptions on $\mu$,
allows us to achieve the best constant (cf. \cite[Theorem 3.3]{CGRT})
for a wide class of functions $\mu$.
To state the optimality of the constant in the estimate we need further assumptions on $\mu$ 
as usually it is done. We find a suitable function $\varphi$ for which the inequality  (\ref{whiIntro})  
doesn't hold if $c>c_{o,\mu}$, this is a crucial point in the proof. 
The way of estimate the bottom of the spectrum is close to the one used in \cite{CGRT}.
We remark that the inequality obtained under our hypotheses applies in the context of weighted 
multipolar Hardy inequalities stated in the forthcoming paper \cite{CPT2}.

Finally we state an existence and nonexistence result in Section 4
following the Cabr\'e-Martel's approach and using some results stated in 
\cite{GGR, CGRT} when the function $\mu$ belongs to 
$C^{1,\lambda}_{loc}(\R^N)$ or belongs to 
$C^{1,\lambda}_{loc}(\R^N\setminus\{0\})$, for some
$\lambda\in (0,1)$.

Some classes of functions $\mu$ satisfying the hypotheses of the main Theorems  are given
in Section 2.

\bigskip

\section{Weighted Hardy inequalities}

\bigskip

Let $\mu$ a weight function in $\R^N$. We define the weighted Sobolev space 
$H^1_\mu=H^1(\R^N, \mu(x)dx))$
as the space of functions in $L^2_\mu:=L^2(\R^N, \mu(x)dx)$ whose weak derivatives belong to
$(L_\mu^2)^N$.

As first step we consider the following conditions on $\mu$ which we need to state 
a preliminary weighted Hardy inequality.

\begin{itemize}
\item[$H_1)$] $\quad\mu\ge 0$, $\mu\in L^1_{loc}(\R^N)$;
\item[$H_2)$] $\quad\nabla \mu \in L_{loc}^1(\R^N)$;
\item[$H_3)$] $\quad$ there exist constants $k_1, k_2\in \R$, $k_2>2-N$, such that if
\begin{equation*}
\fe=(\varepsilon +|x|^{2})^{\frac{\alpha}{2}}, 
\quad \alpha< 0 , \quad \varepsilon >0,
\end{equation*}
it holds
$$\frac{\nabla f_\varepsilon}{f_\varepsilon}\cdot\nabla\mu=
\frac{\alpha x}{\varepsilon +|x|^{2}} \cdot\nabla\mu
\le\left( k_1 
+ \frac{k_2\alpha}{\varepsilon +|x|^2}\right) \mu$$
for any $\varepsilon>0$. 

\end{itemize}

\medskip

\noindent The condition $H_3)$ contains the requirement that the scalar product
$\alpha x \cdot\frac{\nabla\mu}{\mu}$ is bounded in $B_R$, $R>0$,
while $\frac{\alpha x}{\varepsilon +|x|^{2}}\cdot\frac{\nabla\mu}{\mu}$
is bounded in $\R^N\setminus B_R$,
where $B_R$ is a ball of radius $R$ centered in zero. 

The reason we use the function $f_\varepsilon$, introduced in \cite{Davies}, will be clear in the proof of 
the weighted Hardy inequality which we will state below. 
Finally we observe that we need the condition $k_2>2-N$ to apply Fatou's lemma in the proof of 
Theorem \ref{Thm wHi}. 
\medskip

\begin{thm}\label{Thm wHi}
Under conditions  $H_1)$--$H_3)$ there exists a positive constant $c$  such that
\begin{equation}\label{wHiCap5iniziale}
c \int_{\R^N} \frac{\varphi^2}{|x|^2}\,d\mu \le \int_{\R^N}|\nabla \varphi|^2\,d\mu 
+k_1\int_{\R^N}\varphi^2\,d\mu,
\end{equation}
for any function $\varphi \in C^\infty_c(\R^N)$, where $c\in(0,c_o(N+k_2)]$ with
$c_o(N+k_2)=\left( \frac{N+k_2-2}{2}\right)^2 $.
\end{thm}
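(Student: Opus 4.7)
The natural strategy suggested by $H_3)$ is a square-expansion argument built around the Davies regularization $\fe$. My plan is to start from
\begin{equation*}
0\le\int_{\R^N}\Bigl|\nabla\varphi-\varphi\,\tfrac{\nabla\fe}{\fe}\Bigr|^{2}\,d\mu,
\end{equation*}
set $\psi_\varepsilon:=\nabla\fe/\fe=\alpha x/(\varepsilon+|x|^{2})$, expand the square, and integrate by parts in the cross term. Since $\varphi\in C^\infty_c(\R^N)$ and $\mu,\nabla\mu\in L^1_{loc}$ by $H_1)$--$H_2)$, this converts $2\int\varphi\nabla\varphi\cdot\psi_\varepsilon\,d\mu$ into $-\int\varphi^{2}[\mathrm{div}\,\psi_\varepsilon+\psi_\varepsilon\cdot\nabla\mu/\mu]\,d\mu$ with no boundary contribution.

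The second step is to use $H_3)$ precisely where it was designed: to control the drift piece $\psi_\varepsilon\cdot\nabla\mu/\mu$ by $k_1+k_2\alpha/(\varepsilon+|x|^2)$. Combined with the direct computations of $\mathrm{div}\,\psi_\varepsilon$ and $|\psi_\varepsilon|^{2}$, short algebra gathers the remaining terms into a single factor and yields the $\varepsilon$-regularized inequality
\begin{equation*}
\int_{\R^N}\varphi^{2}\,\frac{-\alpha\varepsilon(N+k_{2})-\alpha|x|^{2}(N-2+k_{2}+\alpha)}{(\varepsilon+|x|^{2})^{2}}\,d\mu\le\int_{\R^N}|\nabla\varphi|^{2}\,d\mu+k_{1}\int_{\R^N}\varphi^{2}\,d\mu.
\end{equation*}

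The last step is the limit $\varepsilon\to 0^{+}$. Choosing $\alpha\in(-(N-2+k_{2}),0)$ makes both summands in the numerator non-negative, hence the integrand is non-negative; it converges pointwise on $\R^N\setminus\{0\}$ to $-\alpha(N-2+k_{2}+\alpha)\varphi^{2}/|x|^{2}$, and Fatou's lemma then produces the unregularized inequality. Optimizing the coefficient $c(\alpha)=-\alpha(N-2+k_{2}+\alpha)$ over $\alpha<0$ gives the maximum $\bigl(\tfrac{N-2+k_{2}}{2}\bigr)^{2}=c_{o}(N+k_{2})$, attained at $\alpha^{\ast}=-(N-2+k_{2})/2$.

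The delicate step is passing to the limit, since $\varphi^{2}/|x|^{2}$ is not a priori in $L^{1}(d\mu)$. The passage must go through Fatou, which requires the regularized integrand to be (essentially) non-negative; the constraint $k_{2}>2-N$ encoded in $H_3)$ is exactly what guarantees a non-empty range of admissible $\alpha$'s with a strictly positive optimizing constant, so both the sign analysis and the subsequent optimization work out. Everything else amounts to bookkeeping: the integration by parts is routine under the local integrability of $\mu$ and $\nabla\mu$, and the pointwise algebra is just a rearrangement of the three explicit terms $\mathrm{div}\,\psi_\varepsilon$, $|\psi_\varepsilon|^{2}$, and the $H_3)$ bound over a common denominator.
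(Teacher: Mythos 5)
Your proposal is correct and is essentially the paper's own proof in a different guise: expanding $0\le\int|\nabla\varphi-\varphi\nabla\fe/\fe|^2\,d\mu$ and integrating the cross term by parts is algebraically identical to the paper's substitution $\varphi=\psi\fe$ followed by discarding $\int|\nabla\psi|^2\fe^2\,d\mu$, and you arrive at the same regularized inequality, the same Fatou limit (justified by the same sign condition $k_2>2-N$), and the same optimization in $\alpha$.
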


\begin{proof}
As first step we start from the integral of the square of the gradient of the function $\varphi$. 
Then we introduce $\psi =\frac{\varphi}{\fe}$, with $\fe$ defined in $H_3)$, and integrate by parts
taking in mind $H_1)$ and $H_2)$.          
\begin{equation}\label{phi psi}
\begin{split}
&\int_{\R^N}|\nabla \varphi|^2\, d\mu = 
\int_{\R^N}|\nabla (\psi\fe)|^2\, d\mu\\
&=
\int_{\R^N}|\nabla\psi \fe +\nabla\fe \psi|^2\, d\mu \\
&=
 \int_{\R^N}  |\nabla \psi|^2\fe^2\, d\mu +\int_{\R^N} \psi^2|\nabla \fe|^2\, d\mu 
+2\int_{\R^N}\fe\psi\nabla\psi \cdot \nabla\fe \, d\mu\\ 
&=
\int_{\R^N} |\nabla \psi|^2\fe^2\, d\mu
+\int_{\R^N}\psi^2 |\nabla \fe|^2 \, d\mu \\
&-
\int_{\R^N} \psi^2|\nabla \fe|^2 \, d\mu-
\int_{\R^N}\fe^2\psi^2 \frac{\Delta\fe} {\fe}\, d\mu-
\int_{\R^N}\fe^2\psi^2 \frac{\nabla \fe}{\fe} \cdot \nabla \mu\, dx.
\end{split}
\end{equation}
Observing that
$$\Delta f_\varepsilon=\frac{\alpha(N-2+\alpha)|x|^2+\alpha\varepsilon N}
{(\varepsilon +|x|^{2})^{2-\frac{\alpha}{2}}}$$
and using hypothesis $H_3)$ we deduce that
\begin{equation}\label{ineq in wHi}
\begin{split}
\int_{\R^N}|&\nabla \varphi|^2\, d\mu\ge
-\int_{\R^N}\frac{\Delta \fe}{\fe} \varphi^2\, d\mu
-\int_{\R^N} \frac{\nabla \fe}{\fe} \cdot \nabla \mu\,\varphi^2\, dx \\
&\ge
-\left[ \alpha (N-2)+\alpha^2\right]\int_{\R^N}\frac{|x|^2 }{(\varepsilon +|x|^2)^2}\varphi^2\, d\mu 
-\varepsilon \alpha N\int_{\R^N}\frac{\varphi^2}{(\varepsilon +|x|^2)^2}\, d\mu\\ 
&- 
k_1 \int_{\R^N}\varphi^2\,d\mu
- k_2 \alpha\int_{\R^N}\frac{\varphi^2}{\varepsilon +|x|^2}\, d\mu\\
&=
[-\alpha (N-2+k_2)-\alpha^2]\int_{\R^N}\frac{|x|^2 }{(\varepsilon +|x|^2)^2}\varphi^2\, d\mu\\
&-
\varepsilon \alpha (N+k_2)\int_{\R^N}\frac{\varphi^2}{(\varepsilon +|x|^2)^2}\, d\mu-
k_1 \int_{\R^N}\varphi^2\,d\mu.
\end{split}
\end{equation}
The constant $-\alpha (N-2+k_2)-\alpha^2$ is greater than zero since $\alpha<0$ and $k_2>2-N$,
so by Fatou's lemma we state the following estimate letting $\varepsilon\to 0$
\begin{equation*}
\int_{\R^N}|\nabla \varphi|^2\, d\mu + k_1 \int_{\R^N}\varphi^2\, d\mu \ge
c\int_{\R^N}\frac{\varphi^2}{|x|^2}\,d\mu,
\end{equation*}
with $c=-\alpha (N-2+k_2)-\alpha^2$.
Finally we observe that 
$$\max_\alpha [-\alpha (N+k_2-2)-\alpha^2] 
=\left( \frac{N+k_2-2}{2}\right)^2=:c_o(N+k_2),$$
attained for $\alpha_o= -\frac{N+k_2-2}{2}$.\qedhere
\end{proof}

\medskip

\begin{rmk}
In an alternative way we can define $\fe$ in $H_3)$ setting $\alpha=\alpha_o$
and get the estimate (\ref{wHiCap5iniziale}) with $c=c_o(N+k_2)$.  
Although the result goes in the same direction,
in the proof we point out that $c_o(N+k_2)$ is the maximum value of the constant $c$.
\end{rmk}

\medskip

\begin{rmk}
In the case $\mu=1$ we obtain the classical Hardy inequality.
We remark that if in the proof we introduce a function $f\in C^\infty(\R^N)$
in place of $\fe$, the inequality (\ref{ineq in wHi}) can be used to get 
Hardy type inequalities
\begin{equation}\label{H type ineq}
\int_{\R^N} V\varphi^2\,dx \le \int_{\R^N}|\nabla \varphi|^2\,dx
\end{equation}
where the potential $V=V(x)\in L^1_{loc}(\R^N)$, $V(x)\ge 0$, is such that
$$
-\frac{\Delta f}{f} \ge V \qquad \forall x\in \R^N.
$$
Operators perturbed by potentials of a more general type,
for which the generation of semigroups was stated,
have been investigated, for example, in  \cite{CRT1,CRT2,CT} when $\mu=1$.
For functions $\mu\ne 1$ such that $k_2\ne 0$ we have to modify the condition $H_3)$
to get the Hardy type inequality (\ref{H type ineq}) with respect to the measure $d\mu$.
\end{rmk}

\medskip

Now we suppose that

\begin{itemize}
\item[$H_4)$] $\quad\mu\ge 0$, $\sqrt{\mu}\in H^1_{loc}(\R^N)$;
\item[$H_5)$] $\quad\mu^{-1}\in L_{loc}^1(\R^N)$.
\end{itemize}
Let us observe that in the hypotheses $H_4)$-$H_5)$ the space 
$C^\infty_c(\R^N)$ is dense in $H^1_\mu$
and $H^1_\mu$ is the completion of $C^\infty_c(\R^N)$ with respect to the Sobolev norm 
$$
\|\cdot\|_{H^1_\mu}^2 := \|\cdot\|_{L^2_\mu}^2 + \|\nabla \cdot\|_{L^2_\mu}^2
$$
(see \cite{T}). For some interesting papers on density of smooth functions in weighted Sobolev spaces  
and related questions we refer, for example,  to 
 \cite{K, FKS, B, KO, CPSC, Z, Bogachev}.

So we can deduce the following result from Theorem \ref{Thm wHi} by density argument.

\medskip

\begin{thm}\label{Thm wHi2}
Under conditions  $H_2)$--$H_5)$ there exists a positive constant $c$  such that
\begin{equation}\label{wHiCap5}
c \int_{\R^N} \frac{\varphi^2}{|x|^2}\,d\mu \le \int_{\R^N}|\nabla \varphi|^2\,d\mu 
+k_1\int_{\R^N}\varphi^2\,d\mu,
\end{equation}
for any function $\varphi \in H^1_\mu$, where $c\in(0,c_o(N+k_2)]$ with
$c_o(N+k_2)=\left( \frac{N+k_2-2}{2}\right)^2 $.
\end{thm}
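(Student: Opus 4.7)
The strategy is a direct density argument: Theorem \ref{Thm wHi} already gives the inequality on the dense subspace $C^\infty_c(\R^N) \subset H^1_\mu$, so it suffices to pass to the limit. Crucially, the extra hypotheses $H_4)$ and $H_5)$ are what guarantee (as the authors note, citing \cite{T}) that $C^\infty_c(\R^N)$ is indeed dense in $H^1_\mu$ with respect to the Sobolev norm $\|\cdot\|_{H^1_\mu}$. Note also that hypotheses $H_1)$ and $H_2)$ required by Theorem \ref{Thm wHi} follow from $H_4)$, since $\mu = (\sqrt\mu)^2 \in L^1_{loc}$ and $\nabla \mu = 2\sqrt\mu\, \nabla \sqrt\mu \in L^1_{loc}$ by Cauchy--Schwarz, so Theorem \ref{Thm wHi} is available.

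Given $\varphi \in H^1_\mu$, I would pick a sequence $\{\varphi_n\} \subset C^\infty_c(\R^N)$ with $\varphi_n \to \varphi$ in $H^1_\mu$, and apply Theorem \ref{Thm wHi} to each $\varphi_n$:
\begin{equation*}
c \int_{\R^N} \frac{\varphi_n^2}{|x|^2}\, d\mu \le \int_{\R^N} |\nabla \varphi_n|^2\, d\mu + k_1 \int_{\R^N} \varphi_n^2\, d\mu.
\end{equation*}
The right-hand side converges to $\int_{\R^N} |\nabla \varphi|^2\, d\mu + k_1 \int_{\R^N} \varphi^2\, d\mu$ by the definition of convergence in $H^1_\mu$. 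For the left-hand side, I would extract a subsequence (still denoted $\varphi_n$) with $\varphi_n \to \varphi$ pointwise $\mu$-a.e., so that $\varphi_n^2/|x|^2 \to \varphi^2/|x|^2$ pointwise $\mu$-a.e., and then apply Fatou's lemma to conclude
\begin{equation*}
c \int_{\R^N} \frac{\varphi^2}{|x|^2}\, d\mu \le \liminf_{n \to \infty} c \int_{\R^N} \frac{\varphi_n^2}{|x|^2}\, d\mu \le \int_{\R^N} |\nabla \varphi|^2\, d\mu + k_1 \int_{\R^N} \varphi^2\, d\mu.
\end{equation*}

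There is no serious obstacle here: the inequality has the singular quantity on the \emph{smaller} side, which is exactly the setting in which Fatou's lemma is usable without any further integrability assumption. The only mild technical point is the extraction of an a.e.-convergent subsequence, which is standard once one has $L^2_\mu$-convergence. The value of the optimal constant $c \in (0, c_o(N+k_2)]$ is inherited unchanged from Theorem \ref{Thm wHi}, since density does not affect the sharp constant obtained in the smooth case.
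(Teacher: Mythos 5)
Your proposal is correct and is exactly the argument the paper intends: the paper itself only says the result follows from Theorem \ref{Thm wHi} ``by density argument'' (using that $H_4)$--$H_5)$ give density of $C^\infty_c(\R^N)$ in $H^1_\mu$, citing \cite{T}), and you have simply supplied the standard details (a.e.-convergent subsequence plus Fatou on the singular side, norm convergence on the other). Nothing further is needed.
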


\medskip 

\medskip

We give some examples of functions $\mu$ which satisfy the hypotheses of \break Theorem \ref{Thm wHi2}.

We remark that, in the hypotheses $\mu=\mu(|x|)\in C^1$ for $|x|\in[r_0,+\infty[$, $r_0> 0$,   
a class of weight functions $\mu$ which satisfies $H_3)$ is the following
\begin{equation}\label{condizioni su mu}
\mu(x) \ge C e^{-\frac{k_1}{2|\alpha|}|x|^2}|x|^{k_2-\frac{k_1}{|\alpha|}\varepsilon},
\quad \text{ for}\quad |x|\ge r_0,
\end{equation}
where $C$ is a constant depending on $\mu(r_0)$ and $r_0$.

Indeed, in the case of radial functions, $\mu(x)=\mu(|x|)$, if we set $|x|=\rho$ the condition $H_3)$ states that 
$\mu$ satisfies the following inequality
$$
\frac{\alpha\rho}{\varepsilon +\rho^2}\mu'(\rho) \le \left(k_1+
\frac{  k_2 \alpha}{\varepsilon +\rho^2}\right)\mu(\rho),
$$
which implies
$$\mu'(\rho) \ge a(\rho)\mu(\rho)$$
with
$$
a(\rho)=\left[\frac{k_1}{\alpha}\left(\frac{\varepsilon +\rho^2}{\rho}\right)
+\frac{  k_2}{\rho}\right].
$$
Integrating in $[r_0, r]$ we get
$$
\mu(r)\ge \mu(r_0) e^{\int_{r_0}^r a(s)ds}=
\mu(r_0)\left(\frac{r}{r_0}\right)^{k_2-\frac{k_1}{|\alpha|}\varepsilon}
e^{-\frac{k_1}{2|\alpha|}(r^2-r_0^2)}
\qquad \text{ for}\quad r\ge r_0,
$$
from which
\begin{equation*}
\mu(r)\ge
\frac{\mu(r_0)}{ r_0^{k_2-\frac{k_1}{|\alpha|}\varepsilon}}
e^{\frac{k_1}{2|\alpha|} r_0^2}
r^{k_2 -\frac{k_1}{|\alpha|}\varepsilon}e^{-\frac{k_1}{2|\alpha|} r^2} \qquad \text{ for}\quad r\ge r_0.
\end{equation*}

\medskip

\begin{exmpl}\label{es cos}
Another class of weight functions satisfying $H_3)$, when $k_1=k_2=0$,  
consists of the bounded increasing functions,
as, for example, $\cos e^{-|x|^2}$. Such a function verifies the requirements in the Theorem \ref{Thm wHi2}.
\end{exmpl}

\noindent In the following example we consider a wide class of functions which contains the Gaussian 
measure and polynomial type measures. A class of functions which behaves as $\frac{1}{|x|^\gamma}$
when $|x|$ goes to zero.

\bigskip

\begin{exmpl}\label{es esp}
We consider the following weight functions 
\begin{equation}\label{esempio}
\mu(x)=\frac{1}{|x|^\gamma}e^{-\delta |x|^m}, \quad \delta \ge 0, \quad \gamma<N-2
\end{equation} 
and state for which values of $\gamma$  and $m$ the functions in (\ref{esempio})
are 
\break \lq\lq good\rq\rq $\,$functions to get the weighted Hardy inequality (\ref{wHiCap5}).

The weight $\mu $ satisfies $H_2)$, $H_4)$ and $H_5)$ if $\gamma>-N$. 
The condition $H_3)$ 
$$
\frac{\alpha(-\gamma-\delta m |x|^m)}{\varepsilon +|x|^2}\le k_1+\frac{\alpha k_2}{\varepsilon +|x|^2}
$$
is fulfilled if
\begin{equation}\label{disug-esempio}
-(\alpha\gamma+\alpha k_2 +k_1 \varepsilon)-\alpha\delta m |x|^m  -k_1|x|^2 \le 0.
\end{equation}
In the case $\delta=0$ we only need to require that
$\gamma\le-k_2-\frac{k_1}{\alpha}\varepsilon$ and we are able to get the Caffarelli-Niremberg inequality
\begin{equation*}
\left( \frac{N-2-\gamma}{2}\right)^2 \int_{\R^N} \frac{\varphi^2}{|x|^2}|x|^{-\gamma}\,dx 
\le \int_{\R^N}|\nabla \varphi|^2|x|^{-\gamma}\,dx \qquad \forall\varphi \in H_\mu^1.
\end{equation*}
While if $\gamma=0$ the inequality (\ref{wHiCap5}) holds,
for $k_1$ large enough, with $k_2=0$ if $m=2$ and with $k_2<0$ if $m<2$.

In general to get (\ref{disug-esempio}) we need the following conditions on 
parameters and on the constant $k_1$:

\begin{itemize}
\item[$i)$] $\qquad \gamma \in (-N,-k_2]$, $\delta=0$, $k_1=0$,
\item[$ii)$] $\qquad \gamma \in (-N,-k_2]$,  $k_1\ge-2\alpha\delta$, $m= 2$,
\item[$iii)$] $\qquad \gamma \in (-N,-k_2)$,   $k_1\ge \tilde k_1$, $m<2$,
\end{itemize}
where $\tilde k_1=\frac{
\frac{m}{2}\left(1-\frac{m}{2}\right)^{\frac{2}{m}-1}(-\alpha \delta m)^{\frac{2}{m}}
}
{[\alpha(\gamma+k_2)]^{\frac{2}{m}-1}
}$,
to get the inequality (\ref{wHiCap5}).
\end{exmpl}

\medskip

\begin{exmpl}
The function $\mu(x)=[\log (1+|x|)]^{-\gamma}$, for $\gamma<N-2$,
behaves as $\frac{1}{|x|^\gamma}$ when $|x|$ goes to 0.
So can state the weighted Hardy inequality (\ref{wHiCap5}) with $k_1=0$ 
and $\gamma \in (-N,-k_2]$ as in the previous example.
\end{exmpl}

\bigskip

\section{Optimality of the constant}

\bigskip

To state the optimality of the constant $c_o(N+K_2)$ in the estimate (\ref{wHiCap5}) 
we need further assumptions on $\mu$ as usually it is done. 
We remark that in the proof of optimality the choice 
of the function $\varphi$ plays a fundamental role.

\medskip

We suppose
\medskip
\begin{itemize}
\item  [$H_6)$] $\quad$
$\frac{\mu(x)}{|x|^\delta} 
\in L^1_{loc}(\R^N)$ iff
$\delta\le N+k_2$.
\end{itemize}

\medskip

\noindent We observe that the condition $H_6)$  is necessary for the technique used 
to estimate the bottom of the spectrum of the operator $-L-V$
in the proof of the optimality. 
For example the functions $\mu$ such that
$$
\lim_{|x|\to 0}\frac{\mu(|x|)}{|x|^{k_2}}=l, \qquad l>0,
$$
verify $H_6)$.

The result below states the optimality of the constant $c_o(N+k_2)$ in the Hardy inequality.

\medskip

\begin{thm}\label{thm-opt}
In the hypotheses of Theorem \ref{Thm wHi2} and if $H_6)$ holds,
 for \break $c>c_o(N+k_2)=\left( \frac{N+k_2-2}{2} \right)^2$ 
the inequality (\ref{wHiCap5}) doesn't hold for any $\varphi \in H_\mu^1$.
\end{thm}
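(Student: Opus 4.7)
The plan is to argue by contradiction: assume that for some $K \ge 0$ the inequality \eqref{wHiCap5} holds for every $\varphi \in H_\mu^1$ with constant $c > c_o(N+k_2)$, and construct a family of test functions along which the Rayleigh-type quotient
$$
Q(\varphi) := \frac{\int_{\R^N}|\nabla\varphi|^2\,d\mu + K\int_{\R^N}\varphi^2\,d\mu}{\int_{\R^N}\varphi^2/|x|^2\,d\mu}
$$
approaches the threshold $\alpha_o^2 = c_o(N+k_2)$, where $\alpha_o := \frac{N+k_2-2}{2}$.

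The natural critical profile is $u_o(x) = |x|^{-\alpha_o}$, which is the formal extremizer but lies just outside $H_\mu^1$ because its gradient has a borderline logarithmic divergence at the origin. I would therefore regularize by lifting the exponent slightly: for small $\varepsilon > 0$ set
$$
\varphi_\varepsilon(x) = |x|^{-\alpha_o + \varepsilon}\,\eta(x),
$$
where $\eta \in C_c^\infty(\R^N)$ equals $1$ on $B_1$ and vanishes outside $B_2$. Hypothesis $H_6)$ delivers exactly what is needed: $|x|^{-2\alpha_o + 2\varepsilon}\mu = \mu/|x|^{N+k_2-2-2\varepsilon}$ and $|x|^{-2\alpha_o+2\varepsilon-2}\mu = \mu/|x|^{N+k_2-2\varepsilon}$ are both in $L^1_{loc}$ for every $\varepsilon > 0$, because the exponents are strictly below $N+k_2$. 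Combined with the density of $C_c^\infty$ in $H_\mu^1$ ensured by $H_4)$--$H_5)$, this places $\varphi_\varepsilon$ (after a short mollification of the pointwise singularity at $0$) in $H_\mu^1$.

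Direct differentiation away from $\{0\} \cup \mathrm{supp}\,\nabla\eta$ gives
$$
|\nabla\varphi_\varepsilon|^2 = (\alpha_o-\varepsilon)^2\,|x|^{-2\alpha_o+2\varepsilon-2} + R_\varepsilon(x),
$$
with $R_\varepsilon$ supported in the annulus $1 \le |x| \le 2$ and bounded uniformly in $\varepsilon$. Writing $I_\varepsilon := \int_{\R^N}\varphi_\varepsilon^2/|x|^2\,d\mu$, the crucial observation is that $I_\varepsilon \to +\infty$ as $\varepsilon \to 0^+$: the borderline non-integrability $\mu/|x|^{N+k_2} \notin L^1_{loc}$ postulated in $H_6)$ forces this by monotone convergence on any fixed ball around $0$. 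Meanwhile $\int \varphi_\varepsilon^2\,d\mu$ stays bounded as $\varepsilon \to 0^+$ since $N+k_2-2-2\varepsilon < N+k_2$. Inserting everything into the assumed inequality $c\,I_\varepsilon \le (\alpha_o-\varepsilon)^2\,I_\varepsilon + O(1)$ and dividing by $I_\varepsilon$ yields
$$
c \le (\alpha_o-\varepsilon)^2 + \frac{O(1)}{I_\varepsilon} \;\xrightarrow[\varepsilon\to 0^+]{}\; \alpha_o^2 = c_o(N+k_2),
$$
contradicting $c > c_o(N+k_2)$.

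The main obstacle is the passage from the purely qualitative information in $H_6)$ (characterizing integrability of $\mu/|x|^\delta$) to the quantitative divergence $I_\varepsilon \to +\infty$; this is handled via monotone convergence on a neighborhood of the origin, so no pointwise asymptotic for $\mu$ is required. A secondary technical point is the admissibility of $\varphi_\varepsilon$ in $H_\mu^1$, which is covered by first smoothing the radial singularity and then invoking density of $C_c^\infty$.
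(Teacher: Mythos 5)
There is a genuine gap at the crucial step ``$I_\varepsilon\to+\infty$''. You justify it by appealing to ``the borderline non-integrability $\mu/|x|^{N+k_2}\notin L^1_{loc}$ postulated in $H_6)$'', but $H_6)$ postulates the opposite: it states that $\mu/|x|^{\delta}\in L^1_{loc}(\R^N)$ \emph{iff} $\delta\le N+k_2$, so at the borderline $\delta=N+k_2$ the density \emph{is} locally integrable. (The hypothesis is satisfiable in exactly this form, e.g.\ by $\mu=|x|^{k_2}\left(\log(e/|x|)\right)^{-2}$ near the origin.) Consequently your integrands $|x|^{-(N+k_2)+2\varepsilon}\mu$ increase, as $\varepsilon\downarrow 0$, to the locally integrable function $|x|^{-(N+k_2)}\mu$, and monotone convergence gives $I_\varepsilon\to I_0<\infty$ rather than $I_\varepsilon\to+\infty$. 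Your final inequality then only yields $c\le \alpha_o^2+C/I_0$, which is no contradiction for $c$ slightly above $c_o(N+k_2)$. So the scheme of approaching the critical exponent $-\alpha_o$ from the integrable side cannot close under $H_6)$ as stated; it would only work for weights whose borderline power is non-integrable, which the hypothesis does not guarantee.

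The paper avoids this by placing the exponent strictly on the non-integrable side: it fixes $\eta$ with $\max\{-\sqrt{c},-\frac{N+k_2}{2}\}<\eta<\min\{-\frac{N+k_2-2}{2},0\}$, an interval that is nonempty precisely because $c>c_o(N+k_2)$. Then $|x|^{2\eta}\in L^1_{loc}(\R^N,d\mu)$ while $|x|^{2\eta-2}\notin L^1_{loc}(\R^N,d\mu)$ (only the strict cases $\delta<N+k_2$ and $\delta>N+k_2$ of $H_6)$ are needed), and with the smooth regularization $(\varepsilon+|x|)^{\eta}\theta$ and Fatou's lemma the quantity $\int\bigl(|\nabla\varphi|^2-c|x|^{-2}\varphi^2\bigr)\,d\mu$ tends to $-(c-\eta^2)\int_{B_1}|x|^{2\eta-2}\,d\mu=-\infty$, while $\int\varphi^2\,d\mu$ stays bounded away from zero; hence $\lambda_1=-\infty$ and the inequality fails for every additive constant. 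If you wish to keep your structure, replace $-\alpha_o+\varepsilon$ by such a fixed $\eta<-\alpha_o$ with $\eta^2<c$: the room below $-\alpha_o$ created by the assumption $c>c_o(N+k_2)$ is exactly the point of the argument.
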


\begin{proof}
Let $\theta \in C_c^\infty(\R^N)$ a cut-off function, $0 \le \theta \le 1$, 
$\theta=1$ in $B_1$ and $\theta=0$ in $B_2^c$.
We introduce the function
$$
\varphie(x)= \left\{\begin{array}{ll}
(\varepsilon+|x|)^\eta \quad &\text{ if } |x|\in [0,1[,\\
(\varepsilon+|x|)^\eta \theta(x) \quad &\text{ if } |x|\in [1,2[,\\
0 \quad &\text{ if } |x|\in [2,+\infty[,
\end{array}
\right. $$
where $\varepsilon >0$ and the exponent $\eta$ is such that
$$
\max \left\lbrace -\sqrt{c},-\frac{N+k_2}{2} \right\rbrace< \eta
< \min \left\lbrace -\frac{N+k_2-2}{2}, 0 \right\rbrace. 
$$
The function $\varphie $ belongs to $H_\mu^1$ for any $\varepsilon >0$.

For this choice of $\eta$ we obtain $\eta^2 < c$,  $|x|^{2\eta}\in L_{loc}^1(\R^N,d\mu)$
and $|x|^{2\eta-2}\notin L_{loc}^1(\R^N,d\mu)$.

Let us assume that $c>c_o(N+k_2)$.   
Our aim is to prove that the bottom of the spectrum of the operator $-(L+V)$ 
\begin{equation}\label{lambda1}
\lambda_1=\inf_{\varphi \in H^1_\mu\setminus \{0\}}
\left(\frac{\int_{{\mathbb R}^N}|\nabla \varphi |^2\,d\mu
-\int_{{\mathbb R}^N}\frac{c}{|x|^2}\varphi^2\,d\mu}{\int_{{\mathbb R}^N}\varphi^2\,d\mu}
\right).
\end{equation}
is $-\infty$. For this purpose we estimate at first the numerator in (\ref{lambda1}).

\begin{equation}\label{numerator}
\begin{split}
\int_{\R^N}& \left( |\nabla \varphie|^2 -\frac{c}{|x|^2}\varphie^2 \right) \, d\mu =
\\ &=
\int_{B_1}\left[ |\nabla(\varepsilon +|x|)^\eta|^2 -\frac{c}{|x|^2}(\varepsilon +|x|)^{2\eta}\right]\, d\mu 
\\ &
+\int_{B_1^c}\left[ |\nabla(\varepsilon +|x|)^\eta \theta|^2
 -\frac{c}{|x|^2}(\varepsilon +|x|)^{2\eta}\theta^2 \right]\, d\mu
\\ & \le
\int_{B_1}\left[\eta^2(\varepsilon +|x|)^{2\eta -2}
-\frac{c}{|x|^2}(\varepsilon +|x|)^{2\eta} \right]\, d\mu\\
&+\eta^2\int_{B_1^c}  (\varepsilon +|x|)^{2\eta -2}\theta^2\, d\mu 
+ \int_{B_1^c}(\varepsilon +|x|)^{2\eta}|\nabla \theta|^2\, d\mu
\\&+
 2\eta\int_{B_1^c} \theta (\varepsilon +|x|)^{2\eta-1}\frac{x}{|x|}\cdot \nabla \theta\, d\mu
\\ & \le
 \int_{B_1} (\varepsilon +|x|)^{2\eta} \left[ \frac{\eta^2}{(\varepsilon +|x|)^2}
 -\frac{c}{|x|^2}\right]  \,d\mu
\\&+ 
2\eta^2\int_{B_1^c}(\varepsilon +|x|)^{2\eta-2}\theta^2\,d\mu 
+ 2\int_{B_1^c}(\varepsilon +|x|)^{2\eta}|\nabla \theta|^2\,d\mu
\\ & \le
\int_{B_1} (\varepsilon +|x|)^{2\eta} \left[ \frac{\eta^2}{(\varepsilon +|x|)^2} 
-\frac{c}{|x|^2}\right]  \,d\mu +C_1,
\end{split}
\end{equation}
where $C_1=\left(2\eta^2+ 2\|\nabla \theta\|_\infty \right)\int_{B_1^c}d\mu$.

Furthermore 
\begin{equation}\label{denominator}
\int_{\R^N}\varphie^2\,d\mu \ge \int_{B_2\setminus B_1 }(\varepsilon +
|x|)^{2\eta}\theta^2\,d\mu =C_{2,\varepsilon}.
\end{equation}
Putting together (\ref{numerator}) and (\ref{denominator}) we get from (\ref{lambda1})
$$
\lambda_1 \le \frac{\int_{B_1}(\varepsilon +|x|)^{2\eta}\left( \frac{\eta^2}
{(\varepsilon +|x|)^2}-\frac{c}{|x|^2} \right) \,d\mu+C_1}{C_{2,\varepsilon}}.
$$
Letting $\varepsilon\to 0$ in the numerator above, taking in mind that 
$|x|^{2\eta}\in L^1_{loc}(\R^N, d\mu)$
and Fatou's lemma, we obtain
$$
\lim\limits_{\varepsilon\to 0}\int_{B_1}(\varepsilon +|x|)^{2\eta}\left( \frac{\eta^2}
{(\varepsilon +|x|)^2}-\frac{c}{|x|^2} \right) \,d\mu
\le -(c-\eta^2)\int_{B_1}|x|^{2\eta-2}\,d\mu=-\infty
$$
and, then,  $\lambda_1=-\infty $.
\end{proof}

\bigskip

\section{Kolmogorov operators and existence and nonexistence results}

\bigskip

In the standard setting one considers $\mu\in C^{1,\lambda}_{loc}(\R^N)$ for some
$\lambda\in (0,1)$ and $\mu>0$  for any $x\in \R^N$.

We consider Kolmogorov operators
\begin{equation}\label{L}
Lu=\Delta u +\frac{\nabla \mu}{\mu}\cdot\nabla u,
\end{equation}
on smooth functions, where the probability density $\mu$ in the drift term 
is not necessarily $(1,\lambda)$-H\"olderian in the whole space 
but belongs to $C^{1,\lambda}_{loc}(\R^N\setminus\{0\})$.

These operators arise from the bilinear form integrating by parts
\begin{equation*}
a_\mu(u,v)=\int_{{\mathbb R}^N}\nabla u\cdot \nabla
v\,d\mu=-\int_{{\mathbb R}^N}(Lu)v\,d\mu.
\end{equation*}
The purpose is to get existence and nonexistence results  for weak solutions to the
the initial value problem on $L^2_\mu$ 
corresponding to the operator $L$ perturbed by an inverse square potential
$$
(P)\quad \left\{\begin{array}{ll}
\partial_tu(x,t)=Lu(x,t)+V(x)u(x,t),\quad \,x\in {\mathbb R}^N, t>0,\\
u(\cdot ,0)=u_0\geq 0\in L_\mu^2,
\end{array}
\right. $$
where $V(x)=\frac{c}{|x|^2}$, with $c>0$.

We say that $u$ is a weak solution to ($P$) if, for each $T, R>0 $, we have
$$u\in C(\left[ 0, T \right] , L^2_\mu ), \quad Vu\in L^1(B_R \times \left( 0,T\right) , d\mu dt )$$
and
\begin{equation*}
\int_0^T \int_{\R^N}u(-\partial_t\phi - L\phi )\,d\mu dt -\int_{\R^N}u_0\phi(\cdot ,0)\,d\mu =
\int_0^T \int_{\R^N} Vu\phi \, d\mu dt
\end{equation*}
for all $\phi \in W_2^{2,1}(\R^N \times \left[ 0,T\right])$ having compact support with $\phi(\cdot , T)=0$, 
where $B_R$ denotes the open ball of $\R^N$ of radius $R$ centered at $0$.
For any $\Omega\subset \R^N$, $ W_2^{2,1}(\Omega\times (0,T)) $ is the parabolic Sobolev space 
of the functions $u\in L^2(\Omega \times (0,T)) $ having weak space derivatives $D_x^{\alpha}
u\in L^2(\Omega \times (0,T))$ for $|\alpha |\le 2$ and weak time derivative
$\partial_t u \in L^2(\Omega \times (0,T))$ equipped with the norm 
\begin{equation*}
\|u\|_{W_2^{2,1}(\Omega\times (0,T))}:= \Biggl( 
\|u\|_{L^2(\Omega \times (0,T))}^2 + \|\partial_t u\|_{L^2(\Omega \times (0,T))}^2 
\, + \sum_{1\le |\alpha |\le 2} \|D^{\alpha}u\|_{L^2(\Omega \times (0,T))}^2
\Biggr)^{\frac{1}{2}}.
\end{equation*}
Let us assume that the function $\mu$ is a probability density on $\R^N$, $\mu >0$. 
In the hypothesis
\medskip
\begin{itemize}
\item[$H_7)$] $\quad\mu \in C_{loc}^{1,\lambda}(\R^N)$, $\lambda\in(0,1)$
\end{itemize}
it is known that the operator $L$ with domain
$$D_{max}(L)=\lbrace u\in C_b(\R^N)\cap W_{loc}^{2,p}(\R^N) \text{ for all }  1<p<\infty, 
Lu\in C_b(\R^N)\rbrace$$
is the weak generator of a not necessarily $C_0$-semigroup in $C_b(\R^N)$. 
Since $\int_{\R^N}Lu\,d\mu=0$ for any $u\in C_c^{\infty}(\R^N)$,
then $d\mu=\mu (x)dx$ is the invariant measure for this semigroup in $C_b(\R^N)$. 
So we can extend it to a positivity preserving and analytic $C_0$-semigroup $\lbrace T(t)\rbrace_{t\ge 0}$  on $L^2_\mu$, 
whose generator is still denoted by $L$ (see \cite{BertoldiLorenzi}).

When the assumptions on $\mu$ allow degeneracy at one point, we require the following conditions
to get $L$ generates a semigroup:

\medskip

\begin{itemize}
\item [$H_8)$] $\quad\mu \in C_{loc}^{1,\lambda}(\R^N\setminus \{0\})$, $\lambda\in(0,1)$, 
$\mu\in H^{1}_{loc}(\R^N)$,  $\frac{\nabla \mu}{\mu}\in L^r_{loc}(\R^N)$ 
for some $r>N$, and $\inf_{x\in K}\mu (x)>0$ for any compact set $K\subset \R^N$.
\end{itemize}
\medskip
So by \cite[Corollary 3.7]{alb-lor-man}), we have that the closure of 
$(L,C_c^{\infty}(\R^N))$ on $L^2_\mu$ generates
a strongly continuous and analytic Markov semigroup $\lbrace T(t)\rbrace_{t\ge 0}$  on $L^2_\mu$.

We observe that the function 
$e^{-\delta |x|^m}$ fully satisfies the condition $H_8)$
while $\cos e^{-|x|^2}$ is $(1,\lambda)$-H\"olderian in $\R^N$ (see Examples in Section 2).

For weight functions $\mu$ satisfying assumption $H_7)$ or $H_8)$ there are 
some interesting properties regarding the semigroup $\{T(t)\}_{t\ge 0}$ generated by 
the operator $L$.  These properties listed in the Proposition below
are well known under hypothesis $H_7)$ (see \cite{BertoldiLorenzi}) and 
have been proved in \cite{CGRT} if $\mu$ satisfies $H_8)$.

\medskip

\begin{prop}\label{propH1} 
Assume that $\mu$ satisfies $H_7)$ or $H_8)$. Then the following assertions hold:
\begin{itemize}
 \item [(i)] $D(L)\subset H^1_{\mu}$.
 \item [(ii)] For every $f\in D(L),\, g\in H^1_{\mu}$ we have
  \[\int L f g \,d\mu=-\int \nabla f\cdot \nabla g\,d\mu.\]
 \item [(iii)] $T(t)L^2_\mu\subset D(L)$ for all $t>0$.
  \end{itemize}
\end{prop}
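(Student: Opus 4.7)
The plan is to identify the generator $L$ with the self-adjoint operator canonically associated to the closed symmetric form
\[
a_\mu(u,v)=\int_{\R^N}\nabla u\cdot\nabla v\,d\mu
\]
with form domain $H^1_\mu$, and then to read off (i)--(iii) from this identification together with the analyticity of $\{T(t)\}_{t\ge 0}$ already asserted in the discussion preceding the proposition.

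Under either $H_7)$ or $H_8)$ the measure $\mu$ satisfies the density hypotheses $H_4)$--$H_5)$, so $C_c^\infty(\R^N)$ is dense in $H^1_\mu$, and $a_\mu$ is densely defined, nonnegative, symmetric and closed on $L^2_\mu$. By the standard form representation theorem there is a unique self-adjoint operator $\tilde L$ on $L^2_\mu$ with $D(\tilde L)\subset H^1_\mu$ and
\[
-\int_{\R^N}\tilde L u\cdot v\,d\mu=a_\mu(u,v),\qquad u\in D(\tilde L),\ v\in H^1_\mu.
\]

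Next I would verify $\tilde L=L$. For $u,v\in C_c^\infty(\R^N)$ integration by parts gives
\[
\int_{\R^N}(Lu)v\,d\mu=\int_{\R^N}\mathrm{div}(\mu\nabla u)\,v\,dx=-\int_{\R^N}\nabla u\cdot\nabla v\,d\mu,
\]
so $L$ and $\tilde L$ coincide on $C_c^\infty(\R^N)$. Since $C_c^\infty(\R^N)$ is a core for $L$---this is the content of \cite{BertoldiLorenzi} under $H_7)$ and of \cite[Corollary 3.7]{alb-lor-man} under $H_8)$---and, by density of $C_c^\infty(\R^N)$ in $H^1_\mu$, is also a core for $\tilde L$, the two self-adjoint extensions must agree. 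Properties (i) and (ii) then follow at once from the identity above.

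Finally, (iii) is a consequence of the analyticity of $\{T(t)\}_{t\ge 0}$ on $L^2_\mu$: any analytic $C_0$-semigroup satisfies $T(t)X\subset\bigcap_{n\ge 1}D(L^n)$ for every $t>0$, so in particular $T(t)L^2_\mu\subset D(L)$. The main obstacle in this program is the core property in the degenerate case $H_8)$, where the smooth-coefficient machinery of \cite{BertoldiLorenzi} is no longer available; it is precisely at this point that the integrability condition $\nabla\mu/\mu\in L^r_{loc}(\R^N)$, $r>N$, built into $H_8)$ is used in \cite{alb-lor-man} to establish essential self-adjointness of $(L,C_c^\infty(\R^N))$.
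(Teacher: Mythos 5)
The paper does not actually prove this proposition: it merely records that the assertions are well known under $H_7)$ (citing \cite{BertoldiLorenzi}) and were proved in \cite{CGRT} under $H_8)$, so the only meaningful comparison is with the content of those references. Your reconstruction via the closed symmetric form $a_\mu$ on $H^1_\mu$ is the standard route and is essentially the argument behind the cited results: the representation theorem for closed nonnegative symmetric forms produces a self-adjoint $\tilde L$ with $D(\tilde L)\subset H^1_\mu$ and the integration-by-parts identity, which are precisely (i) and (ii) once $\tilde L$ is identified with the generator $L$, and (iii) is the usual smoothing property of analytic semigroups. This is a legitimate way to organize the proof, and it makes transparent why (i) and (ii) are really statements about the form operator.

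There is, however, one step that is wrong as stated: density of $C_c^\infty(\R^N)$ in $H^1_\mu$ makes $C_c^\infty(\R^N)$ a \emph{form} core for $a_\mu$, but a form core is not in general an operator core for the associated self-adjoint operator (the Laplacian on $C_c^\infty$ of an irregular bounded domain is a form core for the Dirichlet form without being essentially self-adjoint). The identification $\tilde L=L$ must therefore rest entirely on the essential self-adjointness of $(L,C_c^\infty(\R^N))$ in $L^2_\mu$ --- which you do invoke at the end via \cite[Corollary 3.7]{alb-lor-man} under $H_8)$ --- and not on density in $H^1_\mu$. Under $H_7)$ the analogous $L^2_\mu$-uniqueness statement is also a nontrivial input, since the generator in \cite{BertoldiLorenzi} is built from the $C_b$-semigroup rather than as the closure of $(L,C_c^\infty(\R^N))$; it needs to be quoted, not deduced. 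A minor further point: to see that $\tilde L$ extends $(L,C_c^\infty(\R^N))$ one should check that $Lu\in L^2_\mu$ for test functions $u$, which under $H_8)$ uses the local integrability $\nabla\mu/\mu\in L^r_{loc}(\R^N)$, $r>N$. With the core claim corrected in this way, your outline is sound and matches what the cited references establish.
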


\medskip

The following Theorem stated in \cite{GGR} for
functions $\mu$ satisfying condition $H_7)$,
was proved in \cite{CGRT} for functions $\mu$ under condition $H_8)$.

\medskip

\begin{thm}\label{theor as CM}
Let $0\le V(x)\in L^1_{loc}(\R^N)$. 
Assume that the weight function $\mu$ satisfies $H_4)$, $H_5)$ and $H_8)$.
Then the following assertions hold:
\begin{itemize}
\item[(i)] If $\lambda_1(L+V)>-\infty$, then there exists a
positive weak solution $u\in C([0,\infty),L^2_\mu)$ of $(P)$ satisfying
\begin{equation}\label{eq 1}
\|u(t)\|_{L^2_\mu}\le Me^{\omega t}\|u_0\|_{L^2_\mu},\quad t\ge0
\end{equation}
for some constants $M\ge 1$ and $\omega \in {\mathbb R}$.
 \item[(ii)] If
$\lambda_1(L+V)=-\infty$, then for any $0\le u_0\in
L^2_\mu\setminus \{0\},$ there is no positive weak solution of $(P)$
satisfying \eqref{eq 1}.
\end{itemize}
\end{thm}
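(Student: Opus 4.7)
The plan is to adapt the Cabr\'e--Martel strategy to the Kolmogorov setting by combining the $L^2_\mu$-semigroup $\{T(t)\}_{t\ge 0}$ generated by $L$ with the integration-by-parts identity of Proposition \ref{propH1}(ii). The whole argument hinges on the standard device of approximating $V$ from below by the bounded potentials $V_n(x):=\min\{V(x),n\}$, so that $L+V_n$ is a bounded perturbation of $L$ and therefore generates a positive analytic semigroup $\{T_n(t)\}_{t\ge 0}$ on $L^2_\mu$; setting $u_n(t):=T_n(t)u_0$ produces a classical positive solution of $(P)$ with $V$ replaced by $V_n$.

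For part (i), the hypothesis $\lambda_1(L+V)>-\infty$ together with $V_n\le V$ gives $\lambda_1(L+V_n)\ge \lambda_1(L+V)=:-\omega$ for every $n$. Applying Proposition \ref{propH1}(ii) to $u_n(t)\in D(L)$ (available from item (iii)) yields the energy identity
\[
\tfrac12\frac{d}{dt}\|u_n(t)\|_{L^2_\mu}^2=-\int_{\R^N}|\nabla u_n|^2\,d\mu+\int_{\R^N}V_n u_n^2\,d\mu\le \omega\|u_n(t)\|_{L^2_\mu}^2,
\]
where the inequality is precisely the spectral bound $\lambda_1(L+V_n)\ge -\omega$. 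Gr\"onwall then gives the uniform-in-$n$ estimate $\|u_n(t)\|_{L^2_\mu}\le e^{\omega t}\|u_0\|_{L^2_\mu}$. Since the sequence $u_n$ is nondecreasing in $n$ by comparison between the positive semigroups $T_n$ and $T_{n+1}$, monotone convergence produces a pointwise a.e.\ limit $u\in C([0,\infty),L^2_\mu)$ inheriting the bound (\ref{eq 1}); passing to the limit in the weak formulation of $(P)$ (using $V_n u_n\uparrow Vu$) shows that $u$ is the sought weak solution.

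For part (ii), I argue by contradiction: suppose $u$ is a positive weak solution of $(P)$ satisfying (\ref{eq 1}). A comparison principle for the truncated problems (justified because $V_n$ is bounded and the semigroup $T_n(t)$ is positivity preserving) gives $u_n\le u$, and hence $\|u_n(t)\|_{L^2_\mu}\le Me^{\omega t}\|u_0\|_{L^2_\mu}$ uniformly in $n$ for every $u_0\ge 0$; by density this extends to $\|T_n(t)\|_{L^2_\mu\to L^2_\mu}\le Me^{\omega t}$. Since $L+V_n$ is self-adjoint on $L^2_\mu$, the growth bound equals the spectral bound, whence $\lambda_1(L+V_n)\ge -\omega$ for every $n$. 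Finally, monotone convergence for quadratic forms gives $\lambda_1(L+V_n)\downarrow \lambda_1(L+V)$, so $\lambda_1(L+V)\ge -\omega>-\infty$, contradicting the hypothesis.

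The main obstacle is making these steps rigorous under the weak regularity assumptions $H_4), H_5), H_8)$: since $\mu$ may vanish or blow up at the origin, classical parabolic theory cannot be invoked globally, and both the comparison principle and the energy identity must be recovered from the semigroup generation result of \cite{alb-lor-man} used through Proposition \ref{propH1}. Equally delicate is the spectral-stability step $\lambda_1(L+V_n)\to\lambda_1(L+V)$, which relies on the density of $C^\infty_c(\R^N)$ in $H^1_\mu$ noted after Theorem \ref{Thm wHi} and on monotone convergence for the quadratic forms associated to $L+V_n$; the density is precisely what allows the Rayleigh quotient defining $\lambda_1$ to be computed on $C^\infty_c(\R^N)$, where the approximation $V_n\uparrow V$ is well behaved.
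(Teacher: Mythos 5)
The paper does not prove this theorem at all: it is imported verbatim from the literature, stated in \cite{GGR} under $H_7)$ and proved in \cite{CGRT} under $H_8)$, and the present paper only records it together with Proposition \ref{propH1} as input for Theorem \ref{thm-main2}. So there is no in-paper argument to compare against; your proposal is an attempt to reconstruct the proof that lives in those references. Your overall strategy --- truncating $V$ to $V_n=\min\{V,n\}$, using that $L+V_n$ is a bounded perturbation of the self-adjoint generator, running the energy identity through Proposition \ref{propH1}(ii)--(iii) and Gr\"onwall for part (i), and arguing by contradiction via a comparison with the truncated semigroups for part (ii) --- is indeed the Cabr\'e--Martel scheme that \cite{GGR} and \cite{CGRT} implement, so the skeleton is right.

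There is, however, a genuine gap in your part (ii). The comparison $u_n\le u$ gives $\|T_n(t)u_0\|_{L^2_\mu}\le Me^{\omega t}\|u_0\|_{L^2_\mu}$ only for the \emph{single} nonnegative initial datum $u_0$ of the assumed weak solution; you cannot pass from a bound along one orbit to the operator-norm bound $\|T_n(t)\|_{L^2_\mu\to L^2_\mu}\le Me^{\omega t}$ ``by density,'' since density of the linear span of one vector is not available and the bound is not linear in $u_0$. The standard repair uses self-adjointness and positivity more carefully: writing $\|T_n(t/2)u_0\|^2=\langle T_n(t)u_0,u_0\rangle$ and invoking the spectral theorem (log-convexity of $t\mapsto\|T_n(t)u_0\|^2$, plus irreducibility/positivity of the semigroup to guarantee that $u_0\ge0$, $u_0\ne0$ charges the bottom of the spectrum) one deduces $\lambda_1(L+V_n)\ge-\omega/2$ directly from the single-orbit bound. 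A smaller soft spot in part (i) is the passage to the limit in the weak formulation: you need $Vu\in L^1(B_R\times(0,T),d\mu\,dt)$, which does not follow from monotone convergence alone but from the form bound $\int V\varphi^2\,d\mu\le\int|\nabla\varphi|^2\,d\mu-\lambda_1\int\varphi^2\,d\mu$ that the hypothesis $\lambda_1(L+V)>-\infty$ provides, combined with the energy estimate for $u_n$. With these two points repaired your argument matches the proof in \cite{CGRT}.
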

\medskip

To get existence and nonexistence of solutions to $(P)$ we put together 
the weighted Hardy inequality (\ref{Thm wHi2}), 
Theorem \ref{thm-opt} and Theorem \ref{theor as CM}.
So we can state the following result.

\medskip

\begin{thm}\label{thm-main2}
Assume that the weight function $\mu$ satisfies hypotheses 
$H_2)$--$H_6)$, $H_8)$ and $0\le V(x)\le \frac{c}{|x|^2}$.
The following assertions hold:
\begin{enumerate}
\item[(i)] If $0\le c\le c_o(N+k_2)=\left( \frac{N+k_2-2}{2} \right)^2$, 
then there exists a positive weak
solution $u\in C([0,\infty),L^2_\mu)$ of $(P)$
satisfying
\begin{equation}\label{eq:est}
\|u(t)\|_{L^2_\mu}\le Me^{\omega t}\|u_0\|_{L^2_\mu},\quad
t\ge 0
\end{equation}
for some constants $M\ge 1$, $\omega \in {\mathbb R}$, and any $u_0\in L^2_\mu$. 
\item[(ii)] If
$c> c_o(N+k_2)$, then for any $0\le u_0\in L^2_\mu,\,u_0\neq 0,$
there is no positive weak solution of $(P)$
with $V(x)=\frac{c}{|x|^2}$ satisfying 
(\ref{eq:est}).
\end{enumerate}
\end{thm}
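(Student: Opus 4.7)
The plan is to assemble the three preceding results: the weighted Hardy inequality (Theorem \ref{Thm wHi2}), the optimality of the constant (Theorem \ref{thm-opt}), and the Cabré--Martel type dichotomy (Theorem \ref{theor as CM}). Since assumptions $H_2)$--$H_6)$ and $H_8)$ include all hypotheses needed by each of those three ingredients, the work reduces to showing, for each sign regime of $c$, the correct qualitative behavior of $\lambda_1(L+V)$, and then invoking Theorem \ref{theor as CM}.

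For part (i), I would first use the pointwise bound $0\le V(x)\le \frac{c}{|x|^2}$ with $c\le c_o(N+k_2)$ to majorize $\int_{\R^N} V\varphi^2\,d\mu$ by $c_o(N+k_2)\int_{\R^N}\frac{\varphi^2}{|x|^2}\,d\mu$ for every $\varphi\in H^1_\mu$. Applying Theorem \ref{Thm wHi2} with the sharp constant $c_o(N+k_2)$ would then yield
\begin{equation*}
\int_{\R^N}|\nabla\varphi|^2\,d\mu-\int_{\R^N}V\varphi^2\,d\mu\ge -k_1\int_{\R^N}\varphi^2\,d\mu,
\end{equation*}
so that $\lambda_1(L+V)\ge -k_1>-\infty$. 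With this lower bound on the bottom of the spectrum at hand, Theorem \ref{theor as CM}(i) produces a positive weak solution $u\in C([0,\infty),L^2_\mu)$ of $(P)$ and the exponential $L^2_\mu$-estimate (\ref{eq:est}).

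For part (ii), I would fix $V(x)=c/|x|^2$ with $c>c_o(N+k_2)$ and invoke Theorem \ref{thm-opt}, whose proof exhibits an explicit family $\varphi_\varepsilon$ of test functions in $H^1_\mu$ making the Rayleigh quotient in (\ref{lambda1}) tend to $-\infty$. Consequently $\lambda_1(L+V)=-\infty$, and Theorem \ref{theor as CM}(ii) rules out any positive weak solution of $(P)$ satisfying (\ref{eq:est}) for nontrivial nonnegative initial datum.

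The argument is essentially a synthesis, so the only real care needed is bookkeeping: one must check that the hypotheses of all three cited theorems are implied by $H_2)$--$H_6)$ together with $H_8)$ (which they are, since $H_4)$ and $H_5)$ are already among the standing assumptions, and $H_8)$ is the condition under which the semigroup theory of \cite{CGRT} applies). The delicate point, hidden in the machinery but not in this assembly step, is the equivalence between the bottom-of-the-spectrum bound and the existence/nonexistence alternative for $(P)$; here that equivalence is imported wholesale from Theorem \ref{theor as CM}, so there is no genuine obstacle remaining in the proof of Theorem \ref{thm-main2} itself.
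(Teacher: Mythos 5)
Your proposal is correct and follows exactly the paper's own route: the authors prove this theorem precisely by combining Theorem \ref{Thm wHi2} (to get $\lambda_1(L+V)\ge -k_1>-\infty$ when $c\le c_o(N+k_2)$), Theorem \ref{thm-opt} (whose proof shows $\lambda_1=-\infty$ when $c>c_o(N+k_2)$), and the dichotomy of Theorem \ref{theor as CM}. Your hypothesis bookkeeping matches what the paper assumes as well.
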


\bigskip

\bigskip

\textit{E-mail addresses: acanale@unisa.it, francesco.pappalardo@unina.it, ctarant@unina.it}

\end{document}